\newtheorem{thm}{Theorem}[section]
\theoremstyle{remark}
\newtheorem{rem}[thm]{Remark}
\numberwithin{equation}{section}
\newcommand{\C}{\mathbb{C}}
\newcommand{\CP}{\mathbb{CP}}
\renewcommand{\epsilon}{\varepsilon}
\title{Construction of bicuspidal rational complex projective plane curves}
\author{J\'{o}zsef Bodn\'{a}r}
\address{A. R\'enyi Institute of Mathematics, 1053 Budapest,
Re\'altanoda u. 13-15,  Hungary.}
\email{bodnar.jozef@renyi.mta.hu}
\thanks{The author is supported by the young researcher program and
 the ERC program `LTDBud' at MTA Alfr\'ed R\'enyi Institute of Mathematics.}
\keywords{rational cuspidal curves, Newton pairs, plane curve singularities}
\date{}
\begin{document}

\begin{abstract}
We give several constructions of bicuspidal rational complex projective plane curves, and list the Newton pairs and the multiplicity sequences of the singularities on the resulting curves.
\end{abstract}

\maketitle


\pagestyle{myheadings} \markboth{{\normalsize
J. Bodn\'ar}}{ {\normalsize Construction of bicuspidal rational complex projective plane curves}}

\section{Introduction}

\subsection{Introduction} The study of possible singularity types on complex projective plane curves has a long history.
Just to mention a few results, Namba in~\cite{NamGeo} classified all projective plane curves up to degree $5$.
Flenner and Zaidenberg listed the possible local cusp types (types of locally irreducible singularities) of tricuspidal curves with $d-m = 2$ in~\cite{FleZaiOna} and with $d-m = 3$ in~\cite{FleZaiRat}, where $d$ is the degree of the curve and $m$ is the maximal multiplicity among the multiplicities of the cusps on the curve.
The work in this direction was continued by Fenske, who classified unicuspidal and bicuspidal curves with $d-m = 2$ and $3$ in~\cite{FenRat1}.
In~\cite{FenRat}, he discussed the case of $d-m = 4$ under some additional constraints.

In these works, constructing rational cuspidal curves by giving explicit equations and birational transformations of the complex projective plane plays a crucial role.
One method is to start with a conic in $\mathbb{CP}^2$ and construct a series of birational transformations leading to a series of cuspidal curves.
The birational transformation is usually given by a configuration of divisors (with given self-intersection numbers) on a certain complex surface, such that these divisors  can be blown down in two different ways to get $\mathbb{CP}^2$ (see for example the divisor configurations on Figures~\ref{fig:cremona0}, \ref{fig:cremona1} and \ref{fig:cremona2} in the proof of Theorem~\ref{thm:main}).

In this note, we would like to give some examples of curve constructions leading to bicuspidal rational complex projective plane curves.
To the best of the knowledge of the author, curves under \textbf{(b)} and \textbf{(c)} in Theorem~\ref{thm:main} are new, others are (implicitly) present in \cite{BorZolCom} (see Remark~\ref{rem:notice} and the proof of Theorem~\ref{thm:main}).

In~\cite{BorZolCom}, certain embeddings of $\mathbb{C}^{\ast}$ into $\mathbb{C}^2$ are listed.
Some of them lead to cuspidal complex projective plane curves having exactly two intersection points with the line at infinity in $\mathbb{CP}^2$.
We will be interested only in bicuspidal curves in this note.

Although the existence of cusp configurations \textbf{(a1)--(a4), (d1), (d2), (e), (f)} listed in Theorem~\ref{thm:main} is already proved in \cite{BorZolCom}, the advantage of our approach is that on one hand we explicitly state the Newton pairs and the multiplicity sequences of the singularities on the curves, on the other hand, we also give birational transformations transforming a conic or a bicuspidal curve with simple equation to these rational bicuspidal curves with given cusp configurations.
This explicit construction is left out from \cite{BorZolCom}, although the authors checked that all the given curves are rectifiable, i.e. there is a birational transformation of $\mathbb{CP}^2$ transforming the curve into a line (see \cite[Commentary, p. 306]{BorZolCom}).

Notice that from our construction, it is clear that curves \textbf{(a1)--(a4)} are common generalizations of some curves constructed by Fenske in \cite{FenRat1} and Tono in \cite{TonOna} (see Remark~\ref{rem:notice}).

\subsection{Organization of the paper} After an overview of some relevant invariants of embedded local topological types of complex plane curve singularities and basic facts about blow-ups and birational transformations in Section~\ref{sec:notation}, we give the main theorem -- a list of some existing cusp configurations of bicuspidal curves -- in Section~\ref{sec:construction}, together with the proof by explicit construction of these curves with the given cusp configurations.

\subsection{Acknowledgement} The author would like to thank Andr\'as N\'emethi for useful discussions, the construction of series \textbf{(b)} and the equations of the initial curves in the construction of series \textbf{(a1)--(a4)} in the proof of Theorem~\ref{thm:main}.
The author is grateful to Maciej Borodzik for several discussions and advice.

\section{Notation and definitions}\label{sec:notation}

\subsection{Local plane curve singularities}

By a \emph{local plane curve singularity} we mean a holomorphic function germ $f \in \mathbb{C}\{x,y\}$ such that $f(\mathbf{0}) = 0$, where $\mathbf{0} = (0,0) \in \C^2$ is the origin.
Strictly speaking, we say it is singular only if $\frac{\partial f}{\partial x}(\mathbf{0}) = \frac{\partial f}{\partial y}(\mathbf{0}) = 0$.
We say it is \emph{locally irreducible} if $f$ is irreducible in $\mathbb{C}\{x,y\}$.
We will deal only with locally irreducible singularities in this work, and will often use the word \emph{cusp} for them.
For a sufficiently small neighborhood $B$ of the origin in $\C^2$ (see \cite[Lemma 5.2.1]{WalSin}, in particular, the derivative of $f$ does not vanish in $B \setminus \mathbf{0}$) we will call the zero set defined by $\{(x,y) \in B : f(x,y) = 0\}$ a singular set, a singular \emph{curve branch} or \emph{singularity}, with a slight abuse of terminology.

Two singularities defined by functions $f_1$ and $f_2$ are said to be \emph{locally topologically equivalent} if there exist neighborhoods $B_1, B_2 \ni \mathbf{0}$ of the origin of $\mathbb{C}^2$ and a homeomorphism $\varphi: B_1 \rightarrow B_2$ between them such that its restriction induces a homeomorphism between the zero sets $\{f_1 = 0\} \cap B_1$ and $\{f_2 = 0\} \cap B_2$ inside neighborhoods as well.
We say that two plane curve singularities have the same \emph{local embedded topological type} (or \emph{same topological type} for short) if they are locally topologically equivalent.

Although we allow $f$ to be a power series, in fact, one can assume $f$ is a polynomial: every local plane curve singularity is equivalent topologically (and even analytically) to a singularity given by a polynomial $f \in \mathbb{C}[x,y]$ (see e.g.~\cite[\S 8.3, Theorem 15]{BriKnoPla},~\cite[\S 2.2]{GLSInt}).

Every irreducible local singularity defined by a function $f \in \C\{x,y\}$ can be parametrized by some power series $x(t), y(t) \in \C\{t\}$. 
The \emph{local parametrization} means that $f(x(t),y(t)) \equiv 0$ and $t \mapsto (x(t), y(t))$ is a bijection between some neighborhood of the origin in $\C$ and a neighborhood of the origin in the zero set of $f$.
Up to local topological equivalence, we can assume that the first power series is of form $x(t) = t^a$ for some integer $a > 1$ and the other power series is in fact a polynomial (see e.g.~\cite[Chapter I, Section 3, Theorem 3.3]{GLSInt},~\cite[\S 8.3]{BriKnoPla}): every local plane curve singularity is topologically equivalent to a singularity having local parametrization given by
\begin{equation}\label{eq:localparam}
 x(t) = t^a, \quad y(t) = t^{b_1} + t^{b_2} + \dots + t^{b_r},
\end{equation}
where $a < b_1 < \dots < b_r$, $a > \textrm{gcd}(a,b_1) > \textrm{gcd}(a,b_1,b_2) > \dots > \textrm{gcd}(a,b_1,b_2,\dots, b_r) = 1$.

In the above case we say that the \emph{multiplicity} of the singularity is $a$ and the line $\{y=0\}$ is the \emph{tangent line} of the singularity.
The multiplicity is an invariant of the local topological type.
The numbers $\{a; b_1, \dots, b_r\}$ are called the \emph{characteristic exponents} of the singularity.

We can write formally $y$ as a polynomial of $t = x^{1/a}$, leading to fractional powers (not necessarily in reduced form):
\[ y = x^{\frac{b_1}{a}} +  x^{\frac{b_2}{a}} + \dots +  x^{\frac{b_r}{a}}. \]

One can rewrite the above fractional power sum as
\begin{equation}\label{eq:newtonparam}
y = x^{\frac{q_1}{p_1}}\left( 1 + x^{\frac{q_2}{p_1 p_2}}\left( \dots  \left( 1 +  x^{\frac{q_r}{p_1 p_2 \dots p_r}} \right) \dots \right) \right),
\end{equation}
where  $p_1 < q_1$ and $p_i, q_i$ are coprime for all $i = 1, \dots, r$.
The pairs of numbers 
\[(p_1, q_1), (p_2, q_2), \dots, (p_r, q_r)\]
are called the \emph{Newton pairs} of the singularity.

The \emph{multiplicity sequence} of the singularity (see~\cite[\S 1]{FenRat},~\cite[\S 3.5]{WalSin}) can be defined inductively as follows.
Assume that $f \in \mathbb{C}[x,y]$ is a representative of the given topological type which admits a parametrization given by~\eqref{eq:localparam}. 
Then we can write
\[ f(x_1, x_1y_1) = x_1^a f_1(x_1,y_1) \]
for some $f_1 \in \mathbb{C}[x,y]$ such that $x_1$ does not divide $f_1$. 
Then the multiplicity sequence of $f$ is $[m_1, m_2, \dots, m_s]$, where $m_1 = a$ and $[m_2, \dots, m_s]$ is the multiplicity sequence of the topological type of the singularity given by equation $f_1(x, y) - f_1(0,0) = 0$. 
The multiplicity sequence of a smooth local branch is by definition the empty sequence $[\ ]$. 
It can be proved that the multiplicity sequence is well-defined and $m_1 \geq m_2 \geq \dots \geq m_s > 1$. 
For brevity, we will often write $n_k$ for $k$ consecutive copies of the number $n$ in a multiplicity sequence.
For example, we can write $[4_2, 2_3]$ instead of $[4,4,2,2,2]$.

\subsection{Rational cuspidal curves}

A complex projective plane curve is given as a zero set in the complex projective plane of an irreducible homogeneous polynomial in three variables with complex coefficients:
\[ C = \{[x:y:z] \ : \ h(x,y,z) = 0\} \subset \mathbb{CP}^2 \]
for some irreducible homogeneous polynomial $h \in \mathbb{C}[x,y,z]$.
The degree $d$ of the polynomial is called the \emph{degree of the curve}.

A point $P \in C$ is called \emph{singular} if the gradient vector of the defining equation vanishes at that point, i.e.
\[ \frac{\partial h}{\partial x}(P) = \frac{\partial h}{\partial y}(P) = \frac{\partial h}{\partial z}(P) = 0. \]
Denote by $\textrm{Sing\ } C = \{P_1, P_2, \dots, P_{\nu}\}$ the (finite) set of singular points. 
At each singular point $P \in \textrm{Sing\ } C$ the defining equation $h$ is a representative of a singular function germ.
Therefore, it determines a local plane curve singularity.
The tangent line of a curve $C$ at the point $p$ will be denoted by $T_pC$.

The curve $C$ is called \emph{cuspidal} if all of its singularities are locally irreducible plane curve singularities.
A curve is called \emph{bicuspidal} if $\nu = 2$, that is, it has two singularities only and those are locally irreducible.
The cuspidal curve $C$ is called \emph{rational} if it is homeomorphic to the two-dimensional real sphere $S^2$ (equivalently, if it can be parametrized by the complex projective line $\mathbb{CP}^1$).
By the \emph{degree-genus formula} (see e.g.~\cite[Section II.11]{BPVCom}) one obtains that $C$ is rational if and only if
\begin{equation}\label{eq:degreegenusrational}
\frac{(d-1)(d-2)}{2} = \sum_{j=1}^{\nu} \delta_j,
\end{equation}
where $\delta_j$ is the delta invariant of the singularity at $P_j$ (see e.g. \cite[\S 4.3]{WalSin}).

We will be interested in the question that what are the possible embedded local topological types of singularities on a bicuspidal rational complex projective plane curve.
We do not give a complete classification of possible cusp types, just give a list of existing configurations, providing some explicit constructions.

\subsection{Birational transformations}

The blow-up and blow-down process will be used in the proof of Theorem~\ref{thm:main} to construct cuspidal curves by Cremona transformations.
For further details on the blow-up process and Cremona transformations we refer to~\cite[\S 4, \S 5]{WalSin} and~\cite[\S 5]{MoeRat}.
We briefly recall here the most important facts.
The topological type of the singularity of the strict transform after a blow-up depends only on the topological type of the original singularity.
More concretely, blowing up a singular point with multiplicity sequence $[m_1, m_2, \dots, m_s]$ we obtain a singularity with multiplicity sequence $[m_2, \dots, m_s]$ (in particular, we get a smooth curve if the latter sequence is empty).

The \emph{local intersection multiplicity} (see \cite[\S 4]{WalSin}) of two local curve branches $C_1$ and $C_2$ at a point $p$ will be denoted by $(C_1 \cdot C_2)_p$.

If a smooth local curve branch had a local intersection multiplicity $m$ with the singular curve having multiplicity sequence $[m_1, m_2, \dots, m_s]$, after the blow-up, the strict transforms of the two curve branches will have local intersection multiplicity $m-m_1$ (in particular, they will be disjoint if $m=m_1$).
The new exceptional divisor will have intersection multiplicity $m_1$ with the strict transform of the curve, and self-intersection $-1$.
Blowing up a smooth point of any divisor decreases its self-intersection by $1$.
More generally, if a curve with self-intersection $e$ on a complex surface has a singular point with multiplicity $m$, then blowing up that point leads to a strict transform with self-intersection $e-m^2$.

For  simplicity, when describing a series  of blow-ups and blow-downs, we will use the same notation for a curve/branch and for its strict transform in the other surface.

There are several key invariants appearing in the results on projective plane curves. 
One can take the strict transform $\overline{C}$ under the \emph{local embedded minimal good resolution} $X \rightarrow \mathbb{CP}^2$ of the singularities of the cuspidal curve $C$. 
(That is, we blow up $\mathbb{CP}^2$ several times until we resolve the singularities and obtain a normal crossing configuration: the exceptional divisors and the strict transform of the curve, which is smooth, intersect each other transversely and no three of them goes through the same point.) 
We denote by $\overline{C}^2$ the self-intersection of this strict transform $\overline{C}$ in $X$.

If the cuspidal curve $C$ is of degree $d$ and has $\nu$ singularities with multiplicity sequences $[m_1^{(j)}, \dots, m_{r_j}^{(j)}]$, $j = 1, \dots, \nu$, then
\[ \overline{C}^2 = d^2 - \sum_{j=1}^{\nu} \sum_{i=1}^{r_j} (m_{i}^{(j)})^2 - \sum_{j=1}^{\nu} m_{r_j}^{(j)}. \]


\section{Construction of bicuspidal curves}\label{sec:construction}

\begin{thm}[{\emph{Partially based on Borodzik--\.{Z}o\l\k{a}dek,~\cite{BorZolCom}}}]\label{thm:main}
The following rational bicuspidal curves exist (the singularity types are given by multiplicity sequences and Newton pairs):
\begin{enumerate}

	
	\item[(a1)] A curve of degree $d = u(l-1)m + m - u + 1$ ($u \geq 2, l \geq 2, m \geq 2$) with the following cusp types:
	  \subitem  $[(u-1)(l-1)m + m - u + 1, (m(l-1)-1)_{u-1}, m_{l-2}, m-1]$,
	  \subitem  $[((l-1)m)_u, m_{l-1}]$; alternatively,
	  \subitem  $\left( (u-1)(l-1)m + m - u + 1, u(l-1)m + m - u\right)$,
	  \subitem  $(l-1, u(l-1) + 1)(m, 1)$.
	  
	\item[(a2)] A curve of degree $d = ulm + m + 1$ ($u \geq 2, l \geq 1, m \geq 2$) with the following cusp types:
	  \subitem  $[(lm+1)_u, m_l]$,
	  \subitem  $[(u-1)ml + m, (lm)_{u-1}, m_l]$; alternatively,
	  \subitem  $\left( lm+1, m(ul+1) + u\right)$,
	  \subitem  $((u-1)l+1, ul+1)(m, 1)$.
	  
	\item[(a3)]  A curve of degree $d = ulm + um + 1$ ($u \geq 2, l \geq 1, m \geq 2$) with the following cusp types:
	  \subitem  $[((l+1)m+1)_{u-1}, lm+1, m_l]$,
	  \subitem  $[(u-1)(l+1)m, ((l+1)m)_{u-1}, m_{l+1}]$; alterntively,
	  \subitem  $(m(l+1)+1, m(u(l+1)-1)+u)$,
	  \subitem  $(u-1,u)(l+1,1)(m,1)$.
	  
	\item[(a4)] A curve of degree $d = ulm - u + 1$ ($u \geq 2, l \geq 2, m \geq 2$) with the following cusp types:
	  \subitem  $[(u-1)(lm-1), (lm-1)_{u-1}, m_{l-1}, m-1]$,
	  \subitem  $[(lm)_{u-1}, (l-1)m, m_{l-1}]$; alternatively,
	  \subitem  $(u-1, u)(lm-1, m)$,
	  \subitem  $(l, ul-1)(m, 1)$.
	  
	\item[(b)] A curve of degree $d = 2k+1$, $k \geq 2$ with the following cusp types:
	  \subitem $[k_4]$ and $[2_k]$; alternatively,
	  \subitem $(k,4k+1)$ and $(2,2k+1)$.

	\item[(c)] A curve of degree $d = 4k+1$, $k \geq 1$ with the following cusp types:
	  \subitem $[(2k)_3, 2_k]$ and $[(2k), 2_k]$; alternatively,
	  \subitem $(k, 3k+1)(2,1)$ and $(k,k+1)(2,1)$.
	
    	\item[(d1)] A curve $C_k$ of degree $d = 8k+2$, $k \geq 1$ with the following cusp types:
	  \subitem $[4k+2, 4k-2, 4_{k-1}, 2_2]$ and $[(4k)_2, 4_k]$; alternatively,
	  \subitem $(2k+1, 4k)(2,1)$ and $(k, 2k+1)(4,1)$.

	\item[(d2)] A curve $D_k$ of degree $d = 8k+6$, $k \geq 1$ with the following cusp types:
	  \subitem $[4k+4, 4k, 4_k]$ and $[(4k+2)_2, 4_k, 2_2]$; alternatively,
	  \subitem $(k+1,2k+1)(4,1)$ and $(2k+1, 4k+4)(2,1)$.

    \item[(e)] A curve of degree $d = 3k+4$, $k \geq 1$ with the following cusp types:
      \subitem $[3k, 3_k]$ and $[4_k, 2_3]$; alternatively,
      \subitem $(k,k+1)(3,1)$ and $(2,2k+1)(2,3)$.
	
	\item[(f)] A curve of degree $d = 14$ with cusp types $[8,4,4,2,2]$ and $[6,6,3,3]$; alternatively, $(2,3)(2,1)(2,1)$ and $(2,5)(3,1)$. 
\end{enumerate}
\end{thm}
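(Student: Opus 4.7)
The plan is to prove each of the eleven families in Theorem~\ref{thm:main} by explicit construction, starting from a simple "seed" curve in $\mathbb{CP}^2$ and producing the claimed bicuspidal curve via a Cremona transformation built as a chain of blow-ups followed by a chain of blow-downs. For the families (a1)--(a4), which depend on three integer parameters $u,l,m$, I would take as seed a bicuspidal curve given by a sparse polynomial (something close to $z^{a} y^{b} - x^{a+b}$ in suitable coordinates) with two readily visible cusps on a coordinate axis, so that the Newton-pair factorisation of the target, which ends in $(m,1)$, is already visible in the seed. For (b), (c), (e), (f) and also (d1), (d2) the seed should be a conic or another curve of low degree whose singular behaviour under the anticipated Cremona is easy to control; series (b), attributed to N\'emethi, I would try to obtain from a conic together with a carefully chosen tangent line, iterating a quadratic Cremona centred at the contact point.

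For each family, the birational transformation is encoded, as described in the introduction, by a divisor configuration $\Gamma$ on an intermediate surface $X \to \mathbb{CP}^2$: a tree of rational curves that can be contracted in two different orders, producing two distinct projections $X \to \mathbb{CP}^2$. I would choose $\Gamma$ so that when the seed $C_0$ is pulled back to $X$ and then pushed down along the second contraction, the infinitely-near points witnessing the original singularities of $C_0$ get "inflated" into cusps of the prescribed type. All tracking is then done with the local rules recalled in Section~\ref{sec:notation}: blowing up a cusp drops the first entry of its multiplicity sequence, a smooth branch intersecting the cusp with multiplicity $m$ sees that intersection number drop by $m_1$, and a curve of self-intersection $e$ passing through a singular point of multiplicity $m$ becomes one of self-intersection $e-m^2$. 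Reading these rules in reverse along the inverse Cremona produces cusps of exactly the claimed shapes $[((l+1)m+1)_{u-1}, lm+1, m_l]$, $[k_4]$, $[4k+2, 4k-2, 4_{k-1}, 2_2]$, etc. Rationality of the output is then confirmed by the degree-genus formula~\eqref{eq:degreegenusrational}: one computes $\delta_j = \tfrac{1}{2}\sum_i m_i^{(j)}(m_i^{(j)}-1)$ from the multiplicity sequences and checks that $\sum_j \delta_j = (d-1)(d-2)/2$ for the degree $d$ delivered by the construction.

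The main obstacle is discovering the right divisor configuration $\Gamma$ for each family. For the infinite series (a1)--(a4), this is primarily a combinatorial task: $\Gamma$ must be an iterated chain whose length depends on $l$ and $u$ and which wraps an inner block governed by $m$; at each intermediate stage one has to ensure that the relevant $(-1)$-curves are disjoint and smooth on the strict transform of $C_0$ so that subsequent contractions remain legal. I expect the configuration to require three nested Cremona blocks for these series. Families (d1) and (d2), with their lopsided multiplicity sequences $[4k+2, 4k-2, 4_{k-1}, 2_2]$ and $[(4k+2)_2, 4_k, 2_2]$, look the most delicate and likely force a nontrivial bicuspidal seed rather than a conic, since the two cusps have essentially different shapes rather than being obtained from each other by a symmetric construction. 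Once the configurations are in hand, the remaining bookkeeping--applying the blow-up rules to verify the multiplicity sequences and the Newton pairs, and checking the degree-genus identity--is mechanical.
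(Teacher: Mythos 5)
Your proposal reproduces the general strategy the paper itself announces in its introduction (seed curve, divisor configuration contractible in two ways, bookkeeping with the blow-up rules, degree--genus check), but it stops exactly where the actual mathematical content begins: you acknowledge that ``the main obstacle is discovering the right divisor configuration $\Gamma$ for each family'' and then do not discover any of them. The paper's proof consists precisely of those discoveries. For (a1)--(a4) the seeds are not of the form $z^ay^b-x^{a+b}$; they are the curves $(y^{l-1}z-x^l)^m-x^{lm-1}y=0$ (degree $lm$, cusps $(m,lm-1)$ and $(l-1,l)(m,1)$) and $x(yx^l+z^{l+1})^m-z^{(l+1)m+1}=0$ (degree $lm+m+1$, cusps $(lm+1,(l+1)m+1)$ and $(m,(l+1)m+1)$) -- the $m$-th power in the defining equation is what plants the trailing $(m,1)$ Newton pair, and a binomial seed cannot do this. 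Moreover the four series arise from only two seeds by choosing \emph{different basepoints} for the iterated quadratic Cremona (tangent line at one cusp versus the other, the cusp itself versus the intersection of the two tangent lines); identifying these basepoints and verifying how each choice lengthens the multiplicity sequences is the heart of the argument and is absent from your plan.

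Several of your concrete guesses would also lead you astray. For (b) the paper does not iterate a quadratic Cremona on a conic and its tangent: the seed is a configuration of \emph{two} conics and a line with prescribed tangencies (e.g.\ $x^2+y^2-yz$, $x^2-y^2-yz$, $y-z$), and each step of the induction contracts six curves ($C_1,C_2,E_4,E_3,E_2,E_1$ or $C_1',C_2',E_4,\dots$), not three. Similarly (c) needs two conics meeting with intersection multiplicities $3+1$ plus a common tangent line. For (d1), (d2) your instinct that a conic cannot serve as seed is wrong: the paper's recursion is driven by a conic and a line (plus an auxiliary conic for the initial case), alternating between the $C_k$ and $D_k$ families. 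For (e) the seed is the unicuspidal quartic $y^4-2xy^2z+x^2z^2-yz^3=0$, and for (f) no birational construction is given at all -- the paper simply quotes an explicit parametrization from Borodzik--\.{Z}o\l\k{a}dek. Finally, note that the degree--genus identity you propose as the closing verification is only a necessary condition; the actual proof of each claimed cusp type comes from tracking multiplicity sequences through the explicit blow-ups, which your proposal defers entirely. As written, this is a research programme consistent with the paper's method, not a proof of the theorem.
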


\begin{proof}

Recall that we are going to use the same name for a curve, resp. divisor and its strict transform after a blow-up or blow-down.

{\bf For curves under (a1),} consider the following construction. 
Set $C_{l,m} := \{ (y^{l-1}z - x^l)^m - x^{lm-1}y = 0 \} \subset \mathbb{CP}^2$ for any two integers $l \geq 2, m \geq 2$. 
This is a bicupsidal rational projective plane curve. 
Its degree is $d = lm$ and the self-intersection of the minimal good resolution is $\overline{C}^2 = 0$. 
The two singularities are at points $p = [0:1:0]$ and $q = [0:0:1]$.

At $p$ we have a singularity with one Newton pair $\left( m, lm - 1\right)$, or, with multiplicity sequence $[m_{l-1}, m-1]$.
We will need the local intersection multiplicity with its tangent as well, it is $(T_pC_{l,m} \cdot C_{l,m})_p = lm - 1$.

At $q$ we have a singularity with two Newton pairs $(l-1, l)(m, 1)$, or, with multiplicity sequence $[(l-1)m, m_{l-1}]$. 

In what follows, for the sake of simplicity, when considering blow-ups and blow-downs, we will use the same notation for curves and divisors and their strict transforms.
We perform a quadratic Cremona transformation with two proper basepoints (see~\cite[\S 5.3.4]{MoeRat}): one is the transverse intersection point of $C_{l,m}$ and $T_pC_{l,m}$, the other point is $p$. 
The third (non-proper) basepoint is the one infinitely near to $p$, lying at the intersection of the exceptional divisor of the blow-up at $p$ and the strict transform under this blow-up of the line $pq$.
\begin{center}
\begin{figure}[t]
\includegraphics[width=10cm]{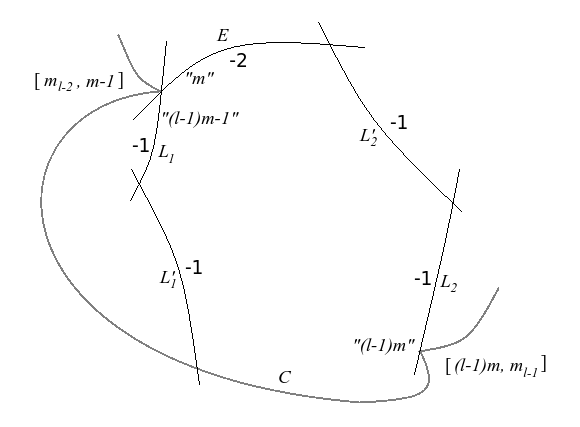}
\caption{The configuration of divisors before blowing down anything at the first step of the construction of curves under (a1).}\label{fig:cremona0}
\end{figure}
\end{center}
\vspace{-3.0em}

More concretely, the following is happening.
Denote by $L_1$ the tangent $T_pC_{l,m}$ and by $L_2$ the line $pq$.
Blow up at $p$ obtaining an exceptional divisor $E$.
Then blow up at the intersection point of (the strict transforms of) the line $L_1$ and the curve $C_{l,m}$ different from the singular point of the curve, obtaining exceptional divisor $L_1'$ and blow up at $E \cap L_2$, obtaining exceptional divisor $L_2'$ (see Figure~\ref{fig:cremona0}).
Notice that this configuration can be blown down in a different way: first blow down $L_1$ and $L_2$, then $E$.
The result is an other bicuspidal curve with longer multiplicity sequences at the cusps.

We repeat similar quadratic Cremona transformation with two proper basepoints. 
We get a three-parameter series ($u \geq 2, l \geq 2, m \geq 2$) of bicuspidal rational curves, with two cusps of topological type as described in the proposition (one quadratic Cremona transformation described above increases $u$ by $1$).

{\bf For curves under (a2),} consider the following construction. 
Set $C_{l,m} := \{ x(yx^l + z^{l+1})^m - z^{(l+1)m + 1} = 0 \} \subset \mathbb{CP}^2$ for any two integers $l \geq 1, m \geq 2$. 
This is a bicuspidal rational projective plane curve. 
Its degree is $d = lm + m + 1$ and the self-intersection of the minimal good resolution is $\overline{C}^2 = 0$. The two singularities are at points $p = [0:1:0]$ and $q = [1:0:0]$.

At $p$ we have a singularity with one Newton pair $\left( lm + 1, (l+1)m + 1\right)$, or, with multiplicity sequence $[lm + 1, m_l]$.

At $q$ we have a singularity with one Newton pair $\left( m, (l+1)m + 1\right)$, or, with multiplicity sequence $[m_{l+1}]$. 
We will need the local intersection multiplicity with its tangent as well, it is $(T_qC_{l,m} \cdot C_{l,m})_q = (l+1)m$.

We perform a quadratic Cremona transformation with two proper basepoints: one is the transverse intersection point of $C_{l,m}$ and $T_qC_{l,m}$, the other point is $q$. 
The third (non-proper) basepoint is the one infinitely near to $q$, lying at the intersection of the exceptional divisor of the blow-up at $q$ and the strict transform under this blow-up of the line $pq$.

We repeat similar quadratic Cremona transformation with two proper basepoints. 
We get a three-parameter series ($u \geq 2, l \geq 1, m \geq 2$) of bicuspidal rational curves, with two cusps of topological type as described in the proposition (one quadratic Cremona transformation described above increases $u$ by $1$).

{\bf For curves under (a3),} consider the following construction. 
Set $C_{l,m} := \{ x(yx^l + z^{l+1})^m - z^{(l+1)m + 1} = 0 \} \subset \mathbb{CP}^2$ for any two integers $l \geq 1, m \geq 2$. 
This is a bicuspidal rational projective plane curve. 
Its degree is $d = lm + m + 1$ and the self-intersection of the minimal good resolution is $\overline{C}^2 = 0$. 
The two singularities are at points $p = [0:1:0]$ and $q = [1:0:0]$.

At $p$ we have a singularity with one Newton pair $\left( lm + 1, (l+1)m + 1\right)$, or, with multiplicity sequence $[lm + 1, m_l]$. 
We will need the local intersection multiplicity with its tangent as well, it is $(T_pC_{l,m} \cdot C_{l,m})_p = (l+1)m+1$.

At $q$ we have a singularity with one Newton pair $\left( m, (l+1)m + 1\right)$, or, with multiplicity sequence $[m_{l+1}]$. 
We will need the local intersection multiplicity with its tangent as well, it is $(T_qC_{l,m} \cdot C_{l,m})_q = (l+1)m$.

We perform a quadratic Cremona transformation with two proper basepoints: one is the smooth transverse intersection point of $C_{l,m}$ and the tangent line $L_q = T_qC_{l,m}$, the other point is the intersection point $L_q \cap L_p$ of the tangents at singularities ($L_p = T_pC_{l,m}$).
The third (non-proper) basepoint is the one infinitely near to $L_p \cap L_q$, lying at the intersection of the exceptional divisor of the blow-up at $L_p \cap L_q$ and the strict transform under this blow-up of the line $L_p$.

We repeat similar quadratic Cremona transformation with two proper basepoints.
We get a three-parameter series ($u \geq 2, l \geq 1, m \geq 2$) of bicuspidal rational curves, with two cusps of topological type as described in the proposition (one quadratic Cremona transformation described above increases $u$ by $1$).

{\bf For curves under (a4),} consider the following construction. 
Set $C_{l,m} := \{ (y^{l-1}z - x^l)^m - x^{lm-1}y = 0\} \subset \mathbb{CP}^2$ for any two integers $l \geq 2, m \geq 2$. 
This is a bicuspidal rational projective plane curve. 
Its degree is $d = lm$ and the self-intersection of the minimal good resolution is $\overline{C}^2 = 0$. 
The two singularities are at points $p = [0:1:0]$ and $q = [0:0:1]$.

At $p$ we have a singularity with one Newton pair $(m, lm - 1)$, or, with multiplicity sequence $[m_{l-1}, m-1]$. 
We will need the local intersection multiplicity with its tangent as well, it is $(T_pC_{l,m} \cdot C_{l,m})_p = lm - 1$.

At $q$ we have a singularity with two Newton pairs $(l-1, l)(m, 1)$, or, with multiplicity sequence $[(l-1)m, m_{l-1}]$.
We will need the local intersection multiplicity with its tangent as well, it is $(T_qC_{l,m} \cdot C_{l,m})_q = lm$.

We perform a quadratic Cremona transformation with two proper basepoints: one is the transverse intersection point of $C_{l,m}$ and the tangent line $L_p = T_pC_{l,m}$, the other point is the intersection point $L_p \cap L_q$ of the tangents at singularities ($L_q = T_qC_{l,m}$). 
The third (non-proper) basepoint is the one infinitely near to $L_p \cap L_q$, lying at the intersection of the exceptional divisor of the blow-up at $L_p \cap L_q$ and the strict transform under this blow-up of the line $L_q$.

We repeat similar quadratic Cremona transformation with two proper basepoints. 
We get a three-parameter series ($u \geq 2, l \geq 2, m \geq 2$) of bicuspidal rational curves, with two cusps of topological type as described in the proposition (one quadratic Cremona transformation described above increases $u$ by $1$).

{\bf For curves under (b),} take a configuration of two conics $C_1$ and $C_2$ and a line $L$ as follows: $C_1$ and $C_2$ has only one intersection point $p = C_1 \cap C_2$ of local intersection multiplicity $4$; $C_1$ and $L$ has also one intersection point only, namely $q = C_1  \cap L$ which is a touching point with local intersection multiplicity $2$; further, $\{ r, s \} = C_2 \cap L$ are two distinct transverse intersection points.

Such a configuration exists, consider for example the set of curves given by equations $\{ \{x^2 + y^2 - yz = 0\}, \{x^2 - y^2 - yz = 0\}, \{y - z = 0\} \}$ in $\CP^2$ equipped with homogeneous coordinates $[x:y:z]$.

Again, for simplicity, we will use the same notation for curves, respectively divisors and their strict transforms. 
Blow up $p$ obtaining an exceptional divisor $E_1$, then blow up the intersection point of (the strict transforms of) $C_1$ and $C_2$ three more times, obtaining exceptional divisors $E_2, E_3, E_4$ in this order.
Then blow up the point $C_1 \cap E_4$ resulting in exceptional divisor $C_1'$ and blow up $s$, one of the intersection points of $L$ and $C_2$, resulting in exceptional divisor $C_2'$.

Notice that one can now blow down $C_1$, $C_2$, $E_4, E_3, E_2$ and $E_1$ in this order, then the strict transform of $L$ will be a unicuspidal rational curve with cusp type $[2]$ at a point to be called $q$ from now on, $C_1'$ a conic going through the cusp at $q$ and touching $L$ at a point to be called $r$ from now on with local intersection multiplicity $4$, $C_2'$ a conic going through $r$ having local intersection multiplicity $5$ with $L$ at $r$ and a transverse intersection point  with $L$ at some other, smooth point which we will call $s$.

Now one can repeat a similar process: rename $C_1'$ to $C_1$, $C_2'$ to $C_2$, blow up four times the intersection point of (the strict transforms of) $C_1$ and $C_2$ (keeping calling it $r$ at each step) resulting in exceptional divisors $E_1$, $E_2$, $E_3$, $E_4$ in this order, then blow up at $C_1 \cap E_4$ obtaining $C_1'$. 
Now further blow up at the intersection point of $L$ and $C_2$ not lying on $E_4$.
\begin{center}
\begin{figure}[b]
\includegraphics[width=12cm]{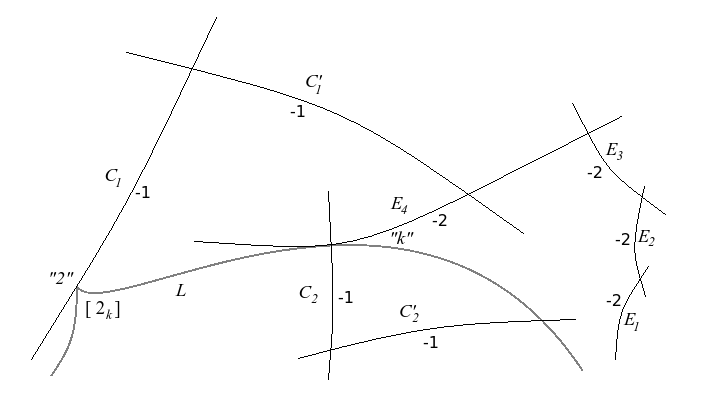}
\caption{The configuration of divisors after blowing up and before blowing down at the intermediate step of constructing curves of type as listed in (b).}\label{fig:cremona1}
\end{figure}
\end{center}
\vspace{-3.0em}

After this one has the following configuration (see Figure~\ref{fig:cremona1}) for $k = 1$ (in a complex surface obtained from $\mathbb{CP}^2$ by the blow-ups):
A curve $L$ with one cusp of type $[2_k]$ at a point to be called $q$, a divisor $C_1$ going through $q$ (having intersection multiplicity $2$ with $L$) and intersecting $C_1'$, $C_1'$ intersecting $E_4$, $E_4$ touching $L$ at a point to be called $r$ with local intersection multiplicity $k$, $C_2$ going through $r$ and intersecting $C_2'$, $C_2'$ intersecting $L$ in a point to be called $s$, $E_i$ intersecting $E_{i+1}$ for $i = 1,2,3$.
Divisors $C_1, C_1', C_2, C_2'$ have self-intersection $-1$, $E_i$, $i = 1,2,3,4$ have self-intersection $-2$.

Notice that this configuration can be blown down to get $\mathbb{CP}^2$ in two different ways: blowing down $C_1'$, $C_2'$, $E_4$, $E_3$, $E_2$, $E_1$ in this order or blowing down $C_1$, $C_2$, $E_4$, $E_3$, $E_2$, $E_1$ in this order.
In the first case, the strict transform of $L$ is a bicuspidal curve with cusp types $[k_4]$ (smooth for the degenerating starting case $k=1$) and $[2_k]$, in the second case it is a bicuspidal curve with cusp types $[(k+1)_4]$ and $[2_{k+1}]$.

This, together with the construction of the staring case $k=1$ above, gives an inductive construction of curves claimed in (b).

{\bf For curves under (c),} consider the following configuration of two conics $C_1$ and $C_2$: $\{p, q\} = C_1 \cap C_2$ such that $p$ is a point with local intersection multiplicity $3$ and $q$ is a transverse intersection point, $L$ is a line tangent to $C_1$ at point $r$ and tangent to $C_2$ at point $s$.

Such a configuration exists, consider for example the set of curves given by equations $\{\{x^2 + y^2 + xy - yz = 0\}, \{x^2 + y^2 - xy - yz = 0\}, \{3y - 4z = 0\}\}$.

Now blow up three times the intersection point of (the strict transforms of) $C_1$ and $C_2$ distinct from $q$, resulting in exceptional divisors $E_1, E_2, E_3$, in this order.
Blow up at point $q$ as well, obtaining the exceptional divisor $E_0$.

Blow up $E_3 \cap C_1$, obtaining $C_1'$ and $E_0 \cap C_2$, obtaining $C_2'$.
Notice that one can blow down $C_1, C_2$, $E_0$, $E_3$, $E_2$, $E_1$ in this order to obtain a curve with $2$ cusps of type $[2_4]$ at point called $p$ and $[2_2]$ at point called $q$, respectively, such that $C_1'$ and $C_2'$ are both going through the cusps at $p$ and $q$ and have no further intersection points with the bicuspidal curve $L$ (the strict transform of the line originally called $L$).
$C_1'$  with $L$ has local intersection multiplicity $6$ at $p$ and $4$ at $q$; $C_2'$ with $L$ has local intersection multiplicity $8$ at $p$ and $2$ at $q$.

Now rename $C_1'$ to $C_1$ and $C_2'$ to $C_2$ and repeat a similar process: blow up at the intersection point of (strict transforms of) $C_1$ and $C_2$ distinct from $q$ three times, obtaining $E_1, E_2, E_3$, respectively, then blow up at $q$ obtaining $E_0$.
Blow up $C_1 \cap E_3$ obtaining $C_1'$ and $C_2 \cap E_0$ obtaining $C_2'$.
Notice that this leads to a configuration (see Figure~\ref{fig:cremona2}) with $k = 1$ as follows:
$E_i$ is intersecting $E_{i+1}$ for $i=1,2$, $E_3$ is intersecting $E_2$, $C_1'$ and $C_2$, $E_0$ is intersecting $C_1$ and $C_2'$, $C_i$ is intersecting $C_i'$ for $i=1,2$.
$L$ has a cusp of type $[2_k]$ at $C_2 \cap E_3$, $L$ having local intersection multiplicity $2k$ with $E_3$ and $2$ with $C_2$ at that point; $L$ has a cusp type $[2_k]$ at $C_1 \cap E_0$ as well, $L$ having local intersection multiplicity $2k$ with $E_0$ and $2$ with $C_1$ at that point.
\begin{center}
\begin{figure}[t]
\includegraphics[width=12cm]{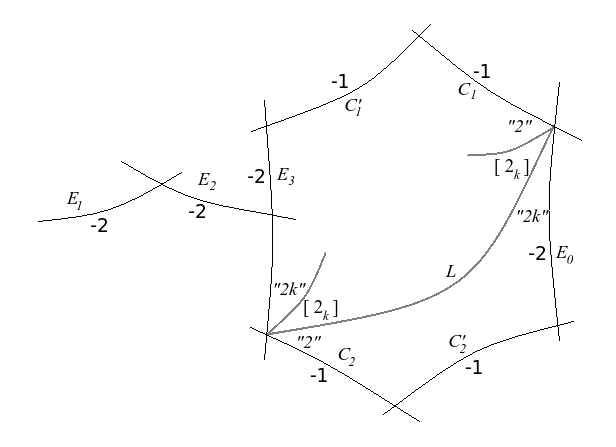}
\caption{The configuration of divisors after blowing up and before blowing down at the intermediate step of constructing curves of type as listed in (c).}\label{fig:cremona2}
\end{figure}
\end{center}
\vspace{-3.0em}

Notice that one can blow down this configuration in two different ways to obtain $\mathbb{CP}^2$ as ambient space: $C_1'$, $C_2'$, $E_0$, $E_3, E_2, E_1$ in this order, or $C_1$, $C_2$, $E_0$, $E_3, E_2, E_1$ in this order.
The first option takes $L$ to a bicuspidal curve with cusp types $[(2k)_3, 2_k]$ and $[(2k), 2_k]$, the second takes $L$ to a bicuspidal curve with cusp types $[(2k+2)_3, 2_{k+1}]$ and $[(2k+2), 2_{k+1}]$.
In this way, this construction together with obtaining the starting case $k=1$ as above, leads to an inductive construction of curves as claimed in (c).

{\bf For curves under (d1) and (d2),} one can check that these are described in~\cite[Main Theorem, (s)]{BorZolCom}, where a parametrization is given for them.

As mentioned in~\cite{BorZolCom}, according to M. Koras, this series was first constructed by Pierrette Cassou-Nogu\`{e}s.
 One can obtain these curves recursively by birational transformation as follows.
 We just sketch the construction.
 Start with a conic $C$ and a line $L$ in $\mathbb{CP}^2$ intersecting in two distinct points $p$ and $q$.
 Blow up at point $p$ to obtain $E_1$, then blow up at point $E_i \cap C$ to obtain $E_{i+1}$ for $i=1,2,3$, then blow up at $L \cap C$ to obtain $F$.
 This configuration can be blown down in an other way to get $\mathbb{CP}^2$ as ambient space again: first blow down $C$, then $L$, then $E_1, E_2, E_3$ in this order.
 The strict transform of $E_4$ will be a line to be renamed to $L$ and the strict transform of $F$ will be a conic to be renamed to $C$.
 This is the birational transformation to obtain $D_k$ from $C_k$ and $C_{k+1}$ from $D_k$.
 The starting configuration is a conic $C'$ touching $C$ in a point $r \neq p,q$ with local intersection multiplicity $4$  and $L$ in a point $s \neq p,q$ with local intersection multiplicity $2$.
 
{\bf Curves under (e)} correspond to those given in~\cite[Main Theorem, (i)]{BorZolCom}.
To give a series of birational transformations producing them, consider the following.
Equation $y^4-2xy^2z+x^2z^2-yz^3=0$ determines a rational unicuspidal curve of degree $4$ and cusp type $[2_3]$ (cf.~\cite[\S 3.2, curve $C_3$]{MoeCus}).
Let $p$ be a smooth inflection point on the curve and $L_1$ be the tangent to the curve at $p$, with local intersection multiplicity $3$.
Let $r$ be the other intersection point of $L_1$ with the curve.
Let $L_2$ be the line having local intersection multiplicity $4$ with the curve at its cusp at point called $q$.
$L_2$ has no other intersection point with the curve.
Let $s$ be the intersection point of $L_1$ and $L_2$.
Blow up $s$ to produce exceptional divisor $E$, then blow up $r$ to produce $L_1'$ and $E \cap L_2$ to produce $L_2'$.
Now blow down $L_1$ and $L_2$, then $E$.
The strict transform of the curve will be a bicuspidal curve as described in (e) for $k=1$.
Rename $L_1'$ to $L_1$ and $L_2'$ to $L_2$ and repeat a similar process, that is, perform a quadratic Cremona transformation with two proper basepoints: one proper basepoint being the point at the cusp of type $[3k, 3_k]$, the other proper basepoint being the transverse intersection point of $L_1$ and the curve and the third, non-proper basepoint being the one infinitely near to the first basepoint, lying at the intersection of (the strict transform of) $L_2$ and the exceptional divisor produced by the blow-up of the first cusp (the one of type $[3k, 3_k]$).
After this process, we get a curve with cusp types $[3(k+1), 3_{k+1}]$ and $[4_{k+1}, 2_3]$ and with local intersection multiplicities with two lines as needed to continue the induction.
 
{\bf For the curve in (f)}, a parametrization (by $[t:s] \in \CP^1$) is given in~\cite[Main Theorem, (t)]{BorZolCom}:
\[ x = 3t^{14} + 3st^{13} + 2s^2t^{12}, \quad y = 3s^{12}t^2 - 3s^{13}t + s^{14}, \quad z = 3s^6t^8. \]
 
\end{proof}

\begin{rem}\label{rem:notice}
Tono in \cite{TonOna} and Fenske in \cite{FenRat1} constructed some rational bicuspidal curves.

From curves under (a1), taking $u = 2$, we get Tono's first bicuspidal 2-parameter series with $\overline{C}^2 = -1$ ($a_{\textrm{Tono}} = l-1$,~\cite[Theorem 2, No. 1]{TonOna}).
Taking $l=2$, we get Fenske's 6th series ($a_{\textrm{Fenske}} = u, d_{\textrm{Fenske}} = m - 1$,~\cite[Theorem 1.1, 6]{FenRat1}).

From curves under (a2), taking $u = 2$ we get Tono's second bicuspidal 2-parameter series with $\overline{C}^2 = -1$ ($a_{\textrm{Tono}} = l$,~\cite[Theorem 2, No. 2]{TonOna}).
For $l=1$, we get Fenske's 5th series ($d_{\textrm{Fenske}} = m, a_{\textrm{Fenske}} = u$,~\cite[Theorem 1.1, 5]{FenRat1}).

From curves under (a3), taking $u = 2$ we get Tono's third bicuspidal 2-parameter series with $\overline{C}^2 = -1$ ($a_{\textrm{Tono}} = l+1$,~\cite[Theorem 2, No. 3]{TonOna}).
For $l = 0$, we get Fenske's 4th series ($d_{\textrm{Fenske}} = m, a_{\textrm{Fenske}} = u - 1$,~\cite[Theorem 1.1, 4]{FenRat1}).

From curves under (a4), taking $u = 2$ we get Tono's 4th bicuspidal 2-parameter series with $\overline{C}^2 = -1$ ($a_{\textrm{Tono}} = l-1$,~\cite[Theorem 2, No. 4]{TonOna}).
In Fenske's terminology~\cite{FenRat}, a cuspidal curve $C$ is of type $(d, d-k)$ if $\textrm{deg}(C) = d$ and the maximal multiplicity of the cusps is $d-k$.
Flenner, Zaidenberg and Fenske classified all rational cuspidal curves with $k = 2, 3$ and, assuming the rigidity conjecture, also for $k = 4$, see~\cite{FenRat1},~\cite{FenRat},~\cite{FleZaiOna},~\cite{FleZaiRat}.  For this series (a4), $k = \textrm{min} \{lm, (u-1)(lm-1)\}$.

Curves under (a1)-(a4) should also be compared with~\cite[Main Theorem, (c)-(f)]{BorZolCom}.

We did not mention in the above list curves in~\cite[Main Theorem, (a)]{BorZolCom}.
Notice that those are of Lin--Zaidenberg type (that is, there exists a line having exactly one intersection point with the curve).
Curves of Lin--Zaidenberg type are completely described in~\cite{TonDef, ZaiLinAn}.
The complement $\mathbb{CP}^2 \setminus C$ of these curves is of logarithmic Kodaira dimension $1$.
For explicit description of local topological types of their singularities see \cite[\S 8]{FLMNOnrves}.

\end{rem}


\begin{thebibliography}{99}
  
\bibitem{BPVCom}
W.~Barth, C.~Peters, and A.~Van~de Ven, \emph{Compact complex surfaces},
  Ergebnisse der Mathematik und ihrer Grenzgebiete (3) [Results in Mathematics
  and Related Areas (3)], vol.~4, Springer-Verlag, Berlin, 1984.  
  
\bibitem{BorZolCom}
M. Borodzik and H. \.{Z}o\l\k{a}dek, \emph{Complex algebraic plane
  curves via {P}oincar\'e-{H}opf formula. {II}. {A}nnuli}, Israel J. Math.
  \textbf{175} (2010), 301--347.
  
\bibitem{BriKnoPla}
E. Brieskorn and H. Kn{\"o}rrer, \emph{Plane algebraic curves},
  Birkh\"auser Verlag, Basel, 1986, Translated from the German by John
  Stillwell.

\bibitem{GLSInt}
G.-M. Greuel, C.~Lossen, and E.~Shustin, \emph{Introduction to singularities
  and deformations}, Springer Monographs in Mathematics, Springer, Berlin,
  2007.

\bibitem{EisNeuThr}
D. Eisenbud and W. Neumann, \emph{Three-dimensional link theory and
  invariants of plane curve singularities}, Annals of Mathematics Studies, vol.
  110, Princeton University Press, Princeton, NJ, 1985.
  
\bibitem{FenRat1}
T. Fenske, \emph{Rational 1- and 2-cuspidal plane curves}, Beitr\"age
  Algebra Geom. \textbf{40} (1999), no.~2, 309--329.

\bibitem{FenRat}
T. Fenske, \emph{Rational cuspidal plane curves of type {$(d,d-4)$} with
  {$\chi(\Theta\sb V\langle D\rangle)\leq0$}}, Manuscripta Math. \textbf{98}
  (1999), no.~4, 511--527.
  
\bibitem{FLMNOnrves}
J.~Fern{\'a}ndez~de Bobadilla, I.~Luengo-Velasco, A.~Melle-Hern{\'a}ndez, and
  A.~N{\'e}methi, \emph{On rational cuspidal projective plane curves}, Proc.
  London Math. Soc. (3) \textbf{92} (2006), no.~1, 99--138.
  
\bibitem{FleZaiOna}
H.~Flenner and M.~Zaidenberg, \emph{On a class of rational cuspidal
  plane curves}, Manuscripta Math. \textbf{89} (1996), no.~4, 439--459.

\bibitem{FleZaiRat}
H.~Flenner and M.~Zaidenberg, \emph{Rational cuspidal plane curves of type {$(d,d-3)$}}, Math.
  Nachr. \textbf{210} (2000), 93--110.
  
\bibitem{MoeRat}
T.~K. Moe, \emph{Rational cuspidal curves}, Master's thesis,
  University of Oslo, 2008.

\bibitem{MoeCus}
T.~K. Moe, \emph{Cuspidal curves on {H}irzebruch surfaces}, Ph.D. thesis,
  University of Oslo, 2013.
  
\bibitem{NamGeo}
M. Namba, \emph{Geometry of projective algebraic curves}, Monographs and
  Textbooks in Pure and Applied Mathematics, vol.~88, Marcel Dekker, Inc., New
  York, 1984.  
  
\bibitem{TonDef}
K. Tono, \emph{Defining equations of certain rational cuspidal plane curves}, Ph.D. thesis,
  Saitama University, 2000.
  
\bibitem{TonOna}
K. Tono, \emph{On a new class of rational cuspidal plane curves with two
  cusps}, preprint available at
  \href{http://arxiv.org/abs/1205.1248}{arXiv:1205.1248}, 2012.
  
\bibitem{WalSin}
C.~T.~C. Wall, \emph{Singular points of plane curves}, London Mathematical
  Society Student Texts, vol.~63, Cambridge University Press, Cambridge, 2004.
  
\bibitem{ZaiLinAn}
M.~G.~Zaidenberg and V. Ya. Lin, \emph{An irreducible simply connected algebraic curve in $\mathbb{C}^2$ is equivalent to a quasihomogeneous curve},
  Dokl. Akad. Nauk SSSR \textbf{271} (1983) no.~5, 1048--1052.

\end{thebibliography}
\end{document}